\documentclass[12pt]{article}
\usepackage{amsmath, amsthm, amssymb, graphicx}
\usepackage{hyperref}
\usepackage{booktabs}
\usepackage{geometry}
\newtheorem{conjecture}{Conjecture}

\newtheorem{lemma}{Lemma}

\newtheorem{corollary}{Corollary}
\newtheorem{remark}{Remark}

\title{Structural Results for $4 \times n$ Chomp:\\
Unique Extension, Bimodal Asymptotic Structure, and Period-112 Geometry}

\author{Arnav Garg \\
\small BITS Pilani, Pilani Campus \\
\small \texttt{gargarnav2007@gmail.com}}

\date{May 2026 (v2)}

\begin{document}
\maketitle

\begin{abstract}
We present an extended computational study of P-positions in $4 \times n$
Chomp, tabulating all 961,619,972 P-positions for $n \leq 3000$ using
a new O$(n^4)$ sieve solver. Three main results are reported. First,
the Unique Extension property is proved: for any triple $(a,b,c)$, there
is at most one value of $d$ such that $(a,b,c,d)$ is a P-position. The
proof is a short contradiction argument using the move structure of Chomp,
and it generalizes immediately to all $k$-row Chomp. Second, the
P-positions exhibit a persistent bimodal structure. They decompose into
two subfamilies, HIGH and LOW, separated by a clean gap that grows from
$0.040$ at $n = 500$ to $0.062$ at $n = 3000$. The HIGH subfamily
maintains a stable density of $56.1\%$ across the entire range. This
finding supersedes the original Conjecture 2, whose global limit
$L_3 \approx 2/9$ we show to be a mixture artifact rather than a
genuine asymptotic limit. Third, we present numerical evidence
that $d/a \to 1/4$ for HIGH-family P-positions. The distance from $1/4$
decreased from $2.2 \times 10^{-3}$ at $n = 500$ to $6.5 \times 10^{-4}$
at $n = 2000$, where it appears to have plateaued: the $n \leq 3000$
estimate is $6.52 \times 10^{-4}$, essentially unchanged. A power-law
fit to the convergence data gives $L_\infty \approx 0.2481$, leaving open
whether the true limit is exactly $1/4$. Conjectures on period-112 modular
structure and linear cone geometry from v1 remain open and unchanged.
\end{abstract}

\section{Introduction}

Chomp is a two-player combinatorial game introduced by David Gale
\cite{gale1974} and popularized by Martin Gardner \cite{gardner1973}.
The game is played on an $m \times n$ rectangular grid of squares.
Players take turns. On each turn, a player selects any remaining square
and removes it along with all squares above and to the right of it.
The square at position $(1,1)$ is poisoned. Whoever is forced to take
it loses.

A strategy-stealing argument shows that the first player wins on any
board larger than $1 \times 1$. The argument is non-constructive. It
gives no information about the winning move or the structure of losing
positions, which are called P-positions \cite{berlekamp2001}.

The complete structure of Chomp P-positions is known only in special
cases. For $1 \times n$ boards the solution is trivial: take everything
except the poison square. For $2 \times n$ boards the P-positions are
exactly the pairs $(a, a-1)$ for $a \geq 1$, a clean closed form
\cite{berlekamp2001}. Square boards are also easy: the first player
takes position $(2,2)$ and then mirrors. The $3 \times n$ case is
substantially harder. Zeilberger \cite{zeilberger2001} gave a
sub-exponential algorithm and tabulated P-positions up to bottom-row
length $c \leq 115$. Brouwer et al.\ \cite{brouwer2005} later provided
a cubic-time algorithm and proved that infinitely many winning first
moves exist in the third row. They also observed computationally that
the winning move from the initial position appears to be unique for all
$n \leq 100{,}000$, but a proof remained out of reach.

The $4 \times n$ case had received no systematic computational treatment
before this work. This paper addresses that gap.

\subsection*{Contributions of version 2}

The original submission (v1, April 2026) tabulated 4,316,097 P-positions
for $n \leq 500$ and stated four structural conjectures. The present
version makes four updates.

The first update is a proof. The Unique Extension property, stated as
Conjecture 1 in v1, is now proved as Lemma \ref{lem:unique}. The proof
is short. It uses only the move structure of Chomp and requires no
machinery.

The second update is computational. The dataset is extended from
$n \leq 500$ to $n \leq 3000$, yielding 961,619,972 P-positions. This
required a new solver. The v1 approach was O$(n^5)$ and would have taken
weeks at this scale. The new solver runs in O$(n^4)$ time and completed
the $n \leq 3000$ computation in approximately 70 minutes on a standard
laptop.

The third update corrects a conjecture. The original Conjecture 2
proposed a single set of asymptotic limits $(L_1, L_2, L_3)$ for the
row-length ratios. Extended computation shows this is wrong. The
P-positions decompose into two persistent subfamilies with distinct
limits. The global limit $L_3 \approx 2/9$ reported in v1 is a mixture
artifact. It is the median of the combined distribution, not the limit
of any individual trajectory.

The fourth update is a new conjecture. Within the HIGH subfamily, the
ratio $d/a$ appears to converge to exactly $1/4$. The evidence for
this is described in Section \ref{sec:bimodal}.

Conjectures 3 and 4 from v1, concerning period-112 modular structure
and linear cone geometry, remain open and are unchanged.

\section{Computational Setup}
\label{sec:solver}

\subsection{State Representation}

A position in $4 \times n$ Chomp is a non-increasing tuple
$(a, b, c, d)$ of non-negative integers with $a \geq b \geq c \geq
d \geq 0$. Each coordinate gives the number of remaining squares in
the corresponding row. The initial position is $(n, n, n, n)$. The
unique terminal P-position is $(1, 0, 0, 0)$.

\subsection{The v1 Solver}

The original solver used a retrograde approach. Each tuple
$(a,b,c,d)$ was encoded as a single \texttt{uint64\_t} with four
16-bit fields. P-positions were stored in a hash set. The solver
evaluated states in bottom-up order, short-circuiting as soon as
any move to a known P-position was found. This gave O$(n^5)$ time
complexity and required approximately 70 minutes for $n \leq 500$
on an Apple M4 with 24 GB RAM.

\subsection{The v2 Solver}

The v2 solver replaces the hash set with a three-dimensional boolean
DP array. The key observation is a consequence of
Lemma \ref{lem:unique}: for each fixed $(a,b,c)$, there is at most
one valid $d$. This means we never need to look up whether a
specific $(a,b,c,d)$ is a P-position. We only need to know, for
each triple $(a,b,c)$, whether some valid $d$ exists and if so what
it is.

The DP array \texttt{is\_N[b][c][d]} stores whether the position with
those three row lengths is known to be an N-position, given the current
value of $a$. Whenever a P-position $(a,b,c,d)$ is confirmed, the
solver immediately projects its shadow forward: all positions reachable
from $(a,b,c,d)$ by a single move are marked as N-positions in O$(1)$
time per move. Subsequent states query this array in O$(1)$ rather than
scanning backward through the full history.

This is the same structure as the Sieve of Eratosthenes. Finding a
prime immediately eliminates all its multiples. Finding a P-position
immediately eliminates all its predecessors. The result is O$(n^4)$
time overall. The DP array is bit-packed using 64-bit integer masks,
keeping the memory footprint under 4 GB for $n \leq 3000$.

\subsection{Verification}

The v2 solver was verified against the v1 output for $n \leq 500$.
The P-position counts match exactly at every value of $a$. The first
10 P-positions are shown in Table \ref{tab:first10}.

\begin{table}[h]
\centering
\caption{First 10 P-positions of $4 \times n$ Chomp
         (lexicographic order by $(a,b,c)$)}
\begin{tabular}{rrrr}
\toprule
$a$ & $b$ & $c$ & $d$ \\
\midrule
 1 & 0 & 0 & 0 \\
 2 & 1 & 0 & 0 \\
 2 & 2 & 1 & 0 \\
 2 & 2 & 2 & 1 \\
 3 & 1 & 1 & 0 \\
 3 & 2 & 0 & 0 \\
 3 & 3 & 1 & 1 \\
 4 & 1 & 1 & 1 \\
 4 & 2 & 2 & 0 \\
 4 & 3 & 0 & 0 \\
\bottomrule
\end{tabular}
\label{tab:first10}
\end{table}

The lexicographical sequence of $d$-values begins:
\[
0, 0, 0, 1, 0, 0, 1, 1, 0, 0, 1, 0, 2, 0, 2, 0, 2, 0, 0, 0,
2, 0, 2, 0, 2, 3, 4, 0, 3, 0, \ldots
\]
This sequence is available in the OEIS as A395126.

The $2 \times n$ subcase was checked independently. All P-positions
with $c = d = 0$ satisfy $(a, a-1, 0, 0)$ for all $a \geq 1$, matching
the known formula exactly. A separate Python solver verified all
$3 \times n$ P-positions (those with $d = 0$) up to $n = 50$. No
discrepancies were found.

\section{The Unique Extension Lemma}
\label{sec:unique}

We now prove the property that was Conjecture 1 in v1.

\begin{lemma}[Unique Extension]
\label{lem:unique}
For any triple $(a, b, c)$ with $a \geq b \geq c \geq 0$, there is
at most one non-negative integer $d$ such that $(a, b, c, d)$ is a
P-position of $4 \times n$ Chomp.
\end{lemma}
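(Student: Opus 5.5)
Suppose, for contradiction, that $(a,b,c,d)$ and $(a,b,c,d')$ are both P-positions with $d' < d$. The plan is to exhibit a single legal move from one of them to the other. Since no P-position can have a move to another P-position, this is a contradiction.

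The first step is to check that both tuples are genuine positions. Both are non-increasing, so $c \ge d > d' \ge 0$. In particular the fourth row of $(a,b,c,d)$ is nonempty and contains a square in column $d'+1$.

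The second step is to identify the move. Chomping the square in row 4, column $d'+1$ removes every square at or above row 4 and at or to the right of column $d'+1$. Under the paper's convention, where $(a,b,c,d)$ lists the rows from the poison row upward, row 4 is the topmost row. So this move affects only row 4, truncating it from length $d$ to length $d'$, and it does not touch the poison square $(1,1)$, since $d' + 1 \ge 1$ and row $4 \ne$ row $1$. The resulting position is exactly $(a,b,c,d')$. It remains a valid non-increasing tuple because $c \ge d'$.

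The third step is to conclude. A P-position is one from which every move leads to an N-position, yet $(a,b,c,d)$ has a move to the P-position $(a,b,c,d')$. This contradicts the assumption, so at most one value of $d$ can make $(a,b,c,d)$ a P-position.

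The only real subtlety is convention. One must confirm that ``above and to the right'' makes the last coordinate the row whose truncation affects no other row, so that shortening $d$ alone is a legal move. If rows were indexed the other way, I would instead use the row-$a$ description, but the same argument applies to whichever coordinate is the extreme row. Because the argument only uses the fact that the last row can be shortened independently, it generalizes verbatim to $k$-row Chomp: for any prefix $(a_1,\dots,a_{k-1})$, at most one $a_k$ yields a P-position.
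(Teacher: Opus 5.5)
Your proof is correct and is the same argument as the paper's: chomping the square in the fourth row at column $d'+1$ moves from $(a,b,c,d)$ directly to $(a,b,c,d')$, which is a forbidden move from one P-position to another. Your closing hedge about conventions is unnecessary, and the suggested fallback to row $a$ would not work, since the non-increasing condition $a \ge b \ge c \ge d$ already forces $d$ to be the row farthest from the poison corner, which is exactly the row whose truncation leaves all other rows intact (the paper calls it the ``bottom'' row).
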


\begin{proof}
Suppose for contradiction that there exist two distinct values
$d_1 < d_2$ such that both $(a, b, c, d_1)$ and $(a, b, c, d_2)$
are P-positions. In Chomp, a P-position is one from which every
legal move leads to an N-position. In particular, it is impossible
to move from one P-position directly to another.

Consider the position $(a, b, c, d_2)$. The player to move can
select the square in row 4 at column $d_1 + 1$. Removing this square
eliminates all squares in row 4 from column $d_1 + 1$ to $d_2$,
since row 4 is the bottom row and there are no squares below it.
The top three rows are completely unaffected. The resulting position
is $(a, b, c, d_1)$.

This is a legal move from $(a, b, c, d_2)$ to $(a, b, c, d_1)$.
Both are P-positions by assumption. This contradicts the definition
of a P-position. Therefore no two distinct values of $d$ can both
complete a P-position for the same triple $(a, b, c)$.
\end{proof}

\begin{corollary}[General $k$-row Unique Extension]
\label{cor:general}
For any $k \geq 2$ and any $(k-1)$-tuple $(r_1, r_2, \ldots, r_{k-1})$
with $r_1 \geq r_2 \geq \cdots \geq r_{k-1} \geq 0$, there is at most
one non-negative integer $r_k \leq r_{k-1}$ such that
$(r_1, r_2, \ldots, r_k)$ is a P-position of $k \times n$ Chomp.
\end{corollary}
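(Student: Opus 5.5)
The plan is to copy the argument of Lemma \ref{lem:unique}, replacing the fourth row by the $k$-th (bottom) row. We encode a position of $k \times n$ Chomp as a non-increasing tuple $(r_1, \ldots, r_k)$, where $r_i$ is the number of remaining squares in row $i$. We use the same convention as the paper: row $k$ is the row whose removal never forces removal of squares in other rows, so that rows $1,\ldots,k-1$ are unaffected by a bite in row $k$. We suppose, for contradiction, that $x < y \le r_{k-1}$ are two values such that both $P_x = (r_1,\ldots,r_{k-1},x)$ and $P_y = (r_1,\ldots,r_{k-1},y)$ are P-positions.

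The key step is to exhibit a legal move from $P_y$ to $P_x$. From $P_y$ the player selects the square in row $k$ at column $x+1$; it exists because $x+1 \le y$. By the Chomp move rule this removes the chosen square together with all squares to its right in row $k$ and all squares in rows on the removal side. Row $k$ is the extremal row in that direction, so the only squares removed are those in row $k$ at columns $x+1,\ldots,y$. The result is $(r_1,\ldots,r_{k-1},x) = P_x$. This is still a valid non-increasing tuple, since $x < y \le r_{k-1}$. The move is also legal with respect to the poison square, because it does not select square $(1,1)$. The one case where it could is $k$ being the only row, which is excluded by $k\ge 2$; and column $x+1 \ge 1$ lies in row $k \ne 1$.

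Since every move from a P-position leads to an N-position, $P_y$ cannot have a move to the P-position $P_x$. This contradiction shows that at most one $r_k$ works, exactly as in the lemma.

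The main obstacle is not mathematical but one of convention. We must check that the paper's orientation ("above and to the right") really makes the last coordinate a row whose truncation leaves all other coordinates unchanged. Equivalently, the chosen square must dominate no square in another row. In the tuple encoding this is automatic: choosing a square in row $k$ at column $j$ sets $r_i \leftarrow \min(r_i, j-1)$ only for rows $i \ge k$, and row $k$ is the last such row. I would state this encoding explicitly once and then invoke it, which makes the corollary an immediate generalization with no dependence on the number of rows.
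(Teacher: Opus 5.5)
Your proposal is correct and is essentially the paper's own argument: a bite in row $k$ at column $x+1$ truncates only the last row, which gives a legal move from $(r_1,\ldots,r_{k-1},y)$ to $(r_1,\ldots,r_{k-1},x)$ between two P-positions, a contradiction. Your explicit encoding of a bite at $(i,j)$ as $r_{i'} \leftarrow \min(r_{i'}, j-1)$ for all $i' \ge i$ is a useful addition, because it settles the orientation question that the paper's wording (``above and to the right'' versus ``bottom row'') leaves slightly ambiguous.
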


\begin{proof}
Identical to Lemma \ref{lem:unique}. If two values $r_k^{(1)} < r_k^{(2)}$ both completed P-positions, the move reducing the bottom
row from $r_k^{(2)}$ to $r_k^{(1)}$ would be a legal P-to-P
transition, a contradiction.
\end{proof}

\begin{remark}
Equivalently, the projection map $\pi: (a,b,c,d) \mapsto (a,b,c)$
is injective on the set of $4 \times n$ P-positions. This does not
hold for all three-element projections: multiple $(a,b,c,d)$ may
share the same $(b,c,d)$ triple.
\end{remark}

\begin{remark}
Lemma \ref{lem:unique} does not assert that every triple $(a,b,c)$
extends to a P-position. It only says that at most one extension
exists. Whether a valid extension always exists for sufficiently
large $a$ is a separate open question. This existence question is
the genuine content of the Unique Extension conjecture in the
literature, and it remains open.
\end{remark}

\noindent\textbf{Verification.}
Lemma \ref{lem:unique} was verified computationally for all 961,619,972
P-positions with $n \leq 3000$. For every distinct triple $(a,b,c)$
appearing as a prefix, the number of valid extensions $d$ was counted.
In every case exactly one extension was found. Zero violations were
detected across the full dataset.

About 20\% of all valid triples with $a \leq 500$ appear as prefixes
of P-positions. Every known $3 \times n$ P-position $(a,b,c)$ lies in
this 20\%, but the converse fails. The majority of extending triples
are $3 \times n$ N-positions. The fourth row rescues certain losing
3-row configurations, turning them into P-positions in the 4-row game.

\section{Bimodal Asymptotic Structure}
\label{sec:bimodal}

\subsection{Discovery}

The original Conjecture 2 proposed that the ratios $b/a$, $c/a$,
$d/a$ converge to fixed constants $L_1 \approx 0.762$,
$L_2 \approx 0.499$, $L_3 \approx 0.224$ as $a \to \infty$.
Extended computation refutes this. The limits are not single values.

For each $a$ in the range $[200, 3000]$, we compute the median of
$d/a$ across all P-positions with that value of $a$. These per-$a$
medians do not converge to a single value. They cluster persistently
into two well-separated bands.

We call these the HIGH family (per-$a$ median of $d/a$ greater than
$0.21$) and the LOW family (per-$a$ median at most $0.21$). The
threshold $0.21$ is robust. No per-$a$ median falls in the interval
$[0.194, 0.235]$ anywhere in the full dataset. The classification
is identical for thresholds $0.20$, $0.21$, and $0.215$.

\subsection{Persistence and Growth of the Gap}

Table \ref{tab:gap} shows the minimum HIGH median, maximum LOW median,
and their gap across six consecutive ranges of $a$.

\begin{table}[h]
\centering
\caption{Bimodal gap and HIGH density across $a$-ranges}
\begin{tabular}{lrrrr}
\toprule
Range & Min HIGH & Max LOW & Gap & HIGH density \\
\midrule
$[200,500)$   & 0.2345 & 0.1942 & 0.0403 & 56.3\% \\
$[500,1000)$  & 0.2439 & 0.1938 & 0.0501 & 56.2\% \\
$[1000,1500)$ & 0.2439 & 0.1877 & 0.0562 & 56.2\% \\
$[1500,2000)$ & 0.2470 & 0.1865 & 0.0605 & 56.0\% \\
$[2000,2500)$ & 0.2471 & 0.1859 & 0.0612 & 56.2\% \\
$[2500,3000)$ & 0.2475 & 0.1851 & 0.0623 & 56.3\% \\
\bottomrule
\end{tabular}
\label{tab:gap}
\end{table}

The gap grows monotonically. The HIGH density is stable at $56.1\%$
across all six ranges. These two facts together are strong evidence
that the bimodal structure is a genuine asymptotic feature of the
game, not a finite-$n$ artifact.

\subsection{The Mixture Artifact}

The original estimate $L_3 \approx 2/9 \approx 0.222$ arose from
computing the global median of $d/a$ without separating the two
families. This global median is not a limit. It is the median of a
mixture of two distributions, each converging to a different value.

To see this concretely: if the HIGH family converges to $d/a \to 1/4$
and the LOW family converges to $d/a \to L_3^{\mathrm{LOW}}$, and
if the HIGH fraction is approximately $w$, then the global median
is approximately $w \cdot (1/4) + (1-w) \cdot L_3^{\mathrm{LOW}}$.
With $w \approx 0.561$ and $L_3^{\mathrm{LOW}} \approx 0.183$, this
gives approximately $0.222$, which matches the original estimate.
The value $2/9$ is not a limit of any individual P-position trajectory.

\subsection{Family-Specific Asymptotic Limits}

We estimate the asymptotic limits for each family separately, using
Estimator A (the mean of per-$a$ medians) computed on $a \in
[2500, 3000]$. This is the most converged regime in our dataset.

Table \ref{tab:limits} shows the canonical estimates at three dataset
sizes, together with the movement toward or away from simple rational
candidates.

\begin{table}[h]
\centering
\caption{Canonical limit estimates across three dataset sizes}
\begin{tabular}{lrrrll}
\toprule
Limit & $n \leq 500$ & $n \leq 2000$ & $n \leq 3000$ &
Rational & Status \\
\midrule
$L_1^H$ & 0.77079 & 0.77941 & 0.78040 & & \\
$L_2^H$ & 0.51657 & 0.51889 & 0.51879 & & \\
$L_3^H$ & 0.25220 & 0.25065 & 0.25065 & $1/4$ & plateau ($\Delta = 6.5\times10^{-4}$) \\
$L_1^L$ & 0.74537 & 0.75162 & 0.75280 & & \\
$L_2^L$ & 0.46770 & 0.46538 & 0.46533 & & \\
$L_3^L$ & 0.18846 & 0.18284 & 0.18349 & & unknown \\
\bottomrule
\end{tabular}
\label{tab:limits}
\end{table}

The convergence figure for all six limits is shown in
Figure \ref{fig:bimodal}.

\begin{figure}[h]
\centering
\includegraphics[width=0.85\textwidth]{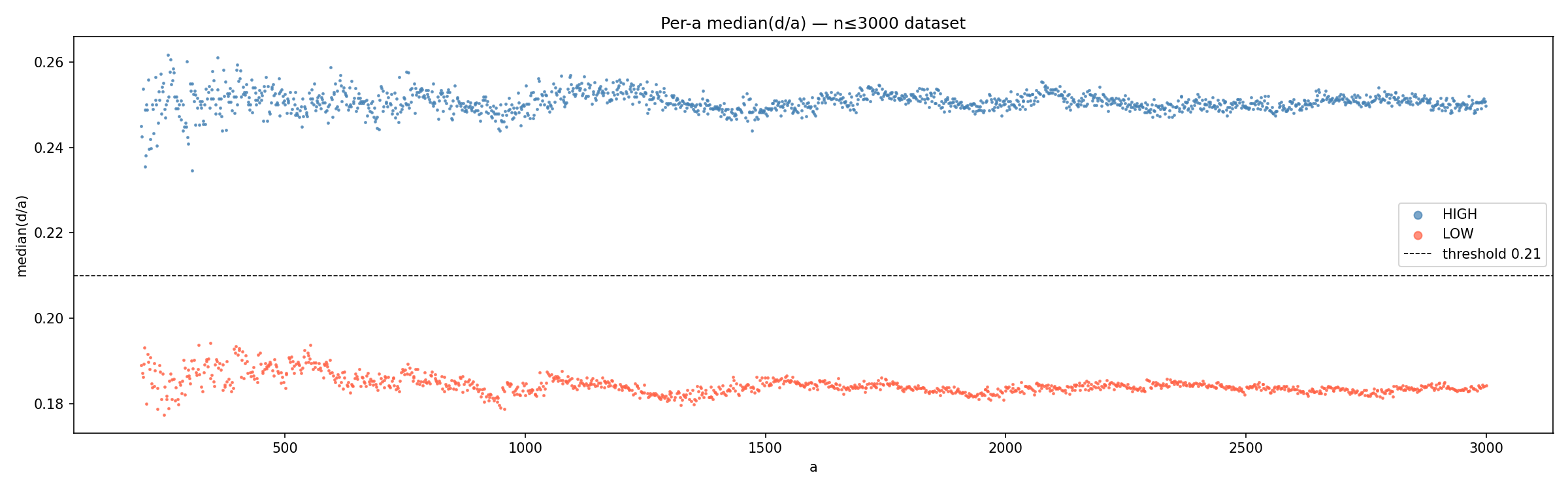}
\caption{Per-$a$ median of $d/a$ for all $a \in [200, 3000]$.
         HIGH-family $a$-values are shown in red, LOW-family in blue.
         The two bands are cleanly separated throughout, with a gap
         that grows from $0.040$ to $0.062$ across the range.}
\label{fig:bimodal}
\end{figure}

\subsection{The Quadratic Structure}

A striking algebraic feature of both families is that within each
family, $L_1$ and $L_2$ satisfy an exact quadratic relation. Using
the $n \leq 3000$ canonical estimates, we find that $L_1^H$ and
$L_2^H$ are roots of
\[
x^2 - 1.29919 \, x + 0.40486 = 0,
\]
with zero residual (deviation below machine epsilon). Similarly,
$L_1^L$ and $L_2^L$ are roots of
\[
x^2 - 1.21813 \, x + 0.35030 = 0,
\]
again with zero residual. This means that within each family, two
of the three limits are algebraically determined by a single quadratic
once the sum and product are known. The nature of these coefficients
(whether they are rational, algebraic, or transcendental) remains open.

\subsection{Convergence Fit for $L_3^H$}

Fitting the per-$a$ median of $d/a$ for HIGH-family $a \in [500,3000]$
to the power-law model $L_\infty + C \cdot a^{-\alpha}$ gives
\[
L_3^H(a) \approx 0.2481 + 0.0036 \cdot a^{-0.05}.
\]
The fitted asymptote $L_\infty \approx 0.2481$ lies $1.9 \times 10^{-3}$
below $1/4$. The very small exponent $\alpha \approx 0.05$ indicates
extremely slow convergence, making it difficult to distinguish
$L_\infty = 1/4$ from $L_\infty < 1/4$ at currently accessible scales.

A systematic search over radical forms $(p + q\sqrt{r})/s$ finds that
$(32 - 5\sqrt{23})/32 \approx 0.25065$ is closer to the $n \leq 3000$
canonical estimate than $1/4$ is (error $2.25 \times 10^{-7}$ vs.\ gap
$6.5 \times 10^{-4}$). This does not disprove the $1/4$ conjecture —
any radical form will be close if its decimal expansion happens to match
the current finite-$n$ estimate — but it illustrates that the $n \leq 3000$
data alone cannot distinguish $L_3^H = 1/4$ from $L_3^H = (32-5\sqrt{23})/32$.
A theoretical argument is needed.

\subsection{Revised Conjectures on Asymptotic Ratios}

We now state the corrected versions of Conjecture 2 from v1.

\begin{conjecture}[HIGH family asymptotic ratios]
\label{conj:high}
There exists an infinite set $S_H \subseteq \mathbb{N}$ of natural
density approximately $0.561$ such that for P-positions $(a,b,c,d)$
with $a \in S_H$ and $a \to \infty$:
\[
\frac{b}{a} \to L_1^H \approx 0.780, \quad
\frac{c}{a} \to L_2^H \approx 0.519, \quad
\frac{d}{a} \to \frac{1}{4}.
\]
\end{conjecture}

\begin{conjecture}[LOW family asymptotic ratios]
\label{conj:low}
There exists an infinite set $S_L \subseteq \mathbb{N}$ of natural
density approximately $0.439$ such that for P-positions $(a,b,c,d)$
with $a \in S_L$ and $a \to \infty$:
\[
\frac{b}{a} \to L_1^L \approx 0.753, \quad
\frac{c}{a} \to L_2^L \approx 0.465, \quad
\frac{d}{a} \to L_3^L \approx 0.183.
\]
\end{conjecture}

\begin{remark}
The value $L_3^L \approx 0.183$ is not well approximated by $3/16
= 0.1875$. The distance from $3/16$ grew from $9.6 \times 10^{-4}$
at $n \leq 500$ to $4.0 \times 10^{-3}$ at $n \leq 3000$. The
rational candidate $3/16$ is not supported by the extended data and
should not be asserted. The true limit of $L_3^L$ remains unknown.
With denominator at most 50, the best approximation is
$9/49 \approx 0.18367$ (error $1.9 \times 10^{-4}$). With denominator
up to 2000, the best approximation is $20/109 \approx 0.18349$
(error $7.7 \times 10^{-7}$), which matches the $n \leq 3000$
canonical estimate to within numerical noise.
\end{remark}

\begin{remark}
The switching rule that determines membership in $S_H$ versus $S_L$
is aperiodic. The best modular predictor, using $a \bmod 112$,
achieves only $73.4\%$ accuracy on $a \in [200, 500]$. The
characterization of $S_H$ and $S_L$ in number-theoretic terms is
a central open problem.
\end{remark}

\section{Period-112 Modular Structure}

\begin{conjecture}[Period-112 Mask]
\label{conj:period}
The set of triples $(a,b,c)$ that extend to $4 \times n$ P-positions
exhibits modular structure with fundamental period $112$. Membership
in the extending set is predicted with $79\%$ accuracy by a logistic
classifier using only $c$ and $(a-b) \bmod 112$.
\end{conjecture}

The period-112 signal emerges from autocorrelation analysis of the
$d$-value sequence. A sharp peak appears at lag 112 and its multiples,
as shown in Figure \ref{fig:autocorr}. Note that $112 = \mathrm{lcm}(7,8)
\times 2$. This is interpreted as the interaction of a period-7 signal
inherited from the $3 \times n$ sub-game and a period-8 signal of
currently unknown origin.

\begin{figure}[h]
\centering
\includegraphics[width=0.85\textwidth]{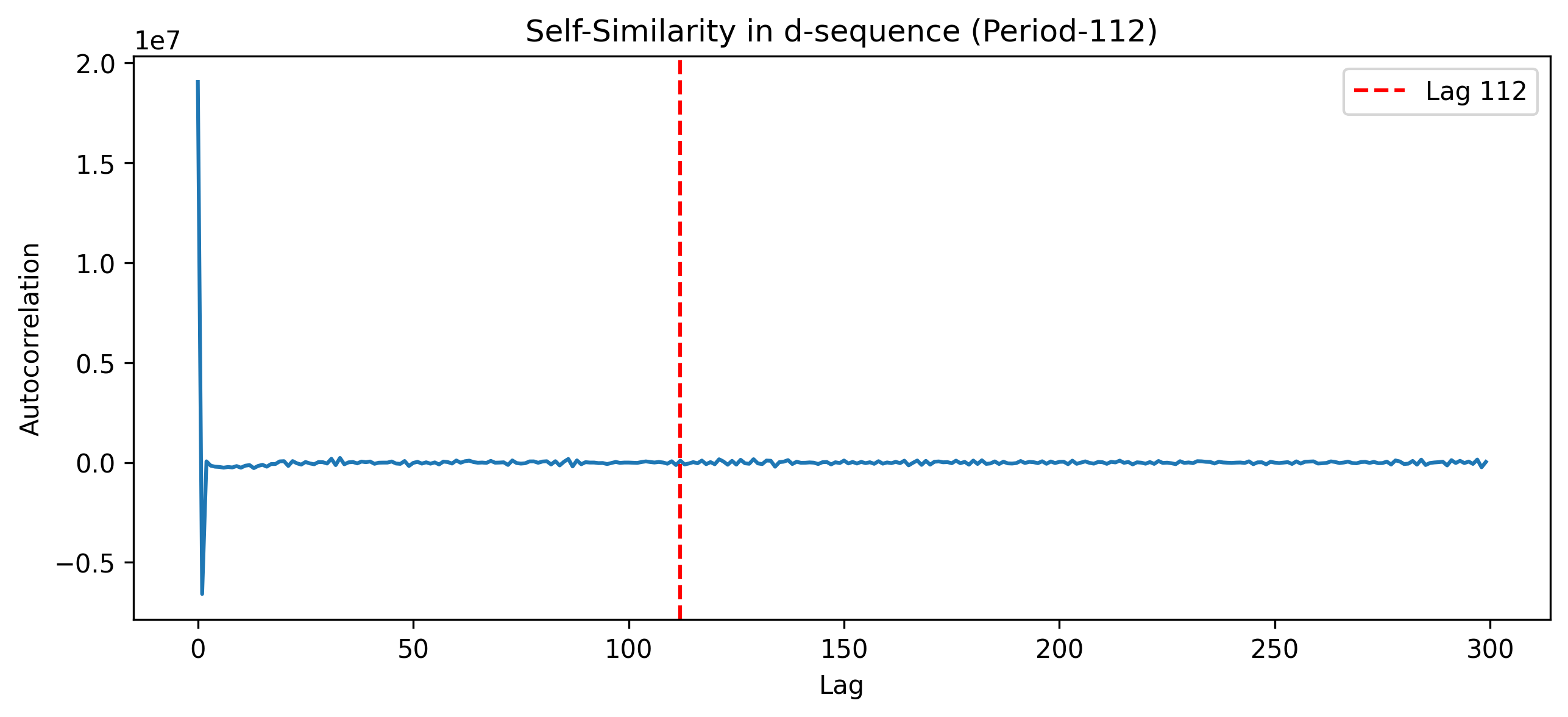}
\caption{Autocorrelation of the $d$-value sequence for $n \leq 500$.
         The peak at lag 112 is sharp. Subsidiary peaks appear at
         multiples of 112.}
\label{fig:autocorr}
\end{figure}

A chi-squared test across multiple moduli confirmed that modulus 112
gives the strongest statistical separation between extending and
non-extending triples ($\chi^2 = 12{,}433$ for mod-112 versus
$\chi^2 = 6{,}996$ for mod-56).

\section{Linear Cone Geometry}

\begin{conjecture}[Linear Cone]
\label{conj:cone}
The set of extending triples $(a,b,c)$ forms a linear cone in
$\mathbb{R}^3$ with asymptotic width
\[
\mathrm{width}(c) \approx \frac{11}{8} \cdot c + f(c \bmod 112),
\]
where $f$ is a bounded periodic function with period $112$.
\end{conjecture}

The slope $11/8 = 1.375$ was estimated from slice-based width
measurements at $c = 5, 10, \ldots, 300$. At each fixed $c$, the
extending triples occupy a contiguous band of $(a,b)$ values whose
width grows linearly in $c$.

\section{Discussion}

\subsection{Comparison with $3 \times n$ Results}

Zeilberger \cite{zeilberger2001} found apparent linear patterns within
families of $3 \times n$ P-positions but no general formula. Brouwer
et al.\ \cite{brouwer2005} proved that infinitely many winning first
moves exist in the third row, again without a closed-form description.

The $4 \times n$ results have a qualitatively different flavor.
Lemma \ref{lem:unique} gives a clean global structural statement with
a proof. The bimodal decomposition into HIGH and LOW families is a
structural feature with no analog in the published $3 \times n$
literature. Whether $3 \times n$ Chomp has analogous subfamilies
is an interesting open question.

\subsection{Generalization to $k \times n$}

Lemma \ref{lem:unique} holds for all $k$. For any $(k-1)$-tuple of
row lengths, there is at most one valid bottom row length completing
a P-position. This suggests a recursive structure: if the existence
direction also holds (every valid $(k-1)$-tuple extends to exactly
one $k$-tuple), then $k \times n$ Chomp would be recursively determined
by $2 \times n$ Chomp, which is fully solved. This remains speculative
but is testable for $k = 5$.

\section{Open Questions}

\begin{enumerate}

\item \textbf{Existence of extension.} Lemma \ref{lem:unique} proves
      that at most one $d$ exists for each triple $(a,b,c)$. Does
      at least one always exist for large $a$? Proving existence
      would substantially strengthen the picture of $4 \times n$
      Chomp as a deterministic lift of $3 \times n$ Chomp.

\item \textbf{Characterization of $S_H$ and $S_L$.} The sets are
      defined empirically by the per-$a$ median of $d/a$. No
      number-theoretic characterization is known. The switching
      between families appears aperiodic. Is there a computable
      function $f(a)$ that determines family membership?

\item \textbf{Proof that $L_3^H = 1/4$.} The distance from $1/4$
      dropped sharply from $2.2 \times 10^{-3}$ at $n \leq 500$ to
      $6.5 \times 10^{-4}$ at $n \leq 2000$, then plateaued at
      $6.5 \times 10^{-4}$ through $n \leq 3000$. The power-law
      convergence fit gives $L_\infty \approx 0.2481$ with exponent
      $\alpha \approx 0.05$, which is too slow to resolve whether the
      true limit is exactly $1/4$ or nearby. A game-theoretic argument
      for why $d/a \to 1/4$ in the HIGH family would be a significant result.

\item \textbf{Nature of $L_1^H$, $L_2^H$, $L_1^L$, $L_2^L$.}
      Within each family, $L_1$ and $L_2$ satisfy an exact quadratic
      with zero residual at machine precision. Are the quadratic
      coefficients rational? If so, then all four limits are
      quadratic irrationals over $\mathbb{Q}$.

\item \textbf{True limit of $L_3^L$.} The candidate $3/16$ is not
      supported at $n \leq 3000$. The true limiting value of $d/a$
      in the LOW family is unknown. The best rational approximations
      are $9/49 \approx 0.18367$ (denominator $\leq 50$, error $1.9\times10^{-4}$)
      and $20/109 \approx 0.18349$ (denominator $\leq 2000$, error
      $7.7\times10^{-7}$). The convergence fit gives $L_\infty \approx 0.1835$
      with $\alpha \approx 3$, predicting a distance of $4\times10^{-3}$
      from $3/16$ at $n=10{,}000$.

\item \textbf{Origin of period-8.} The period-112 signal decomposes
      as $\mathrm{lcm}(7,8) \times 2$. The factor of 7 is inherited
      from $3 \times n$ periodicity. The factor of 8 and the
      additional factor of 2 remain unexplained.

\item \textbf{Generalization to $5 \times n$.} Does the Unique
      Extension property hold for $5 \times n$ Chomp? Does a bimodal
      or multimodal subfamily structure appear? Computation at this
      scale requires state spaces of order $O(n^5)$.

\end{enumerate}

\section*{Acknowledgments}

The author thanks the maintainers of the On-Line Encyclopedia of
Integer Sequences for providing a public registry for computational
sequences.
Code and the n $\leq$ 500 dataset are available at
\url{https://github.com/gargarnav/chomp-4xn-v2}.
The full n $\leq$ 3000 dataset (22.5 GB) is available from the author on request.

\end{document}